\documentclass{article}
\usepackage[utf8]{inputenc}
\usepackage{fullpage}
\usepackage{amsmath,amsfonts,amssymb,amsthm,enumerate,mdframed,hyperref,cleveref,algorithmic,algorithm,multirow,import,tikz,booktabs,xcolor,soul}
\usetikzlibrary{arrows,positioning, shapes} 
\usepackage{caption,subcaption}
\usepackage{enumitem}
\usepackage{authblk}

\newtheorem{theorem}{Theorem}

\newtheorem{lemma}[theorem]{Lemma}

\newtheorem{definition}[theorem]{Definition}
\newtheorem{corollary}[theorem]{Corollary}


\mathcode`*=\numexpr\mathcode`*-"2000\relax

\definecolor{darkgreen}{rgb}{0,0.6,0}

\DeclareMathOperator{\PG}{PG}
\DeclareMathOperator{\supp}{supp}


\title{Trifferent codes with small lengths}
\author{Sascha Kurz}
\affil{Mathematisches Institut, Universität Bayreuth, D-95440 Bayreuth, Germany, sascha.kurz@uni-bayreuth.de}
\date{}

\begin{document}

\maketitle

\begin{abstract}
  A code $C \subseteq \{0, 1, 2\}^n$ of length $n$ is called trifferent if for any three distinct elements of $C$ there exists a coordinate in which they
  all differ. By $T(n)$ we denote the maximum cardinality of trifferent codes with length. $T(5)=10$ and $T(6)=13$ were recently determined  \cite{della2022maximum}. 
  Here we determine $T(7)=16$, $T(8)=20$, and $T(9)=27$. For the latter case $n=9$ there also exist linear codes attaining the maximum possible cardinality $27$.
\end{abstract}

\section{Introduction}
For integers $k\ge 3$ and $n\ge 1$ any set $C\subseteq \{0,1,\dots,k-1\}^n$ is called a $k$-ary \emph{code} of length $n$. If $C$ has the
property that for any $k$ distinct elements there exists a coordinate in which they all differ then $C$ is called \emph{perfect $k$-hash code}.
The problem of determining the largest possible size of a perfect $k$-hash code is a natural combinatorial problem with connections to 
topics in cryptography, information theory, and computer science, see e.g.\ \cite{della2022improved,guruswami2022beating,
korner1988new,liu2006explicit,stinson2008some,wang2001explicit,xing2023beating}.

Given a perfect $k$-hash code $C$ of length $n$ we can define the subcode $C_{\neg i:j}:=\{c\in C\mid c_i\neq j\}$ for each integer $1\le i\le n$ and 
$0\le j\le k-1$, i.e., the set of all codewords that do not have symbol $j$ in position $i$. Since removing the $i$th entries from the codewords of 
$C_{\neg i:j}$ gives a perfect $k$-hash code of length $n-1$, we have $\# C\le \left\lfloor \tfrac{k}{k-1} \cdot \# C_{\neg i:j}\right\rfloor $ and    
\begin{equation}
\# C \le (k-1)\cdot \left(\frac{k}{k-1}\right)^n \label{eq_simple}
\end{equation}
for every $k\ge 3$, c.f.\ \cite{korner1988new}.

The simple upper bound (\ref{eq_simple}) was improved by Fredman and Koml{\'o}s back in 1984, see \cite{fredman1984size}, to
\begin{equation}
  \# C \le \left(2^{k!/k^{k-1}}\right)^n
\end{equation}  
for every $k\ge 4$ and sufficiently large $n$. Since then this bound was improved over the years with various restrictions on $k$, see 
e.g.\ \cite{arikan1994upper,costa2021new,della2022improved,guruswami2022beating,korner1988new}. However, no {\lq\lq}proper{\rq\rq} improvement was obtained 
for the case $k=3$. 

Perfect $3$-hash codes are also called \emph{trifferent codes} and we denote the maximum possible cardinality for length 
$n$ by $T(n)$. Clearly the recursion underlying the upper bound (\ref{eq_simple}) can be slightly improved by downrounding to integers in each iteration:
\begin{equation}
  T(n)\le \left\lfloor \frac{3\cdot T(n-1)}{2}\right\rfloor \label{ie_floor}
\end{equation}
for $n\ge 2$. Starting from $T(6)=13$ an iterative application of (\ref{ie_floor}) gives $T(11)\le 94$, so that $T(n)\le 1.0868 \cdot \left(\tfrac{3}{2}\right)^n$ 
for $n\ge 11$, see \cite[Theorem 2.5]{della2022maximum}, while (\ref{eq_simple}) gives $T(n)\le 2\cdot \left(\tfrac{3}{2}\right)^n$.

Assuming that the trifferent code $C$ has to be linear, i.e., a linear subspace over $\mathbb{F}_3$ improved upper bounds on $\# C$ can be shown \cite{pohoata2022trifference}. 
Currently the best upper bound is $\# C \le 1.2731^n$ for sufficiently large $n$ \cite{bishnoi2023blocking} (assuming that $C$ is linear). 

Here we determine $T(7)=16$, $T(8)=20$, and $T(9)=27$ -- noting that there exists an up to symmetry unique linear trifferent code of dimension $3$ and length $9$. So, it
remains an interesting open question whether the lower bounds $T(14)\ge 81$ and $T(19)\ge 243$, based on linear codes, from \cite{bishnoi2023blocking} can be beaten.

\section{Results obtained by exhaustive enumeration}
One way to represent trifferent codes is to write the codewords as columns of a matrix. As an example we state the three trifferent codes 
of length $n=6$ and cardinality $T(6)=13$ from \cite{della2022maximum}:
$$
  \left(\begin{smallmatrix}
    0 & 0 & 2 & 2 & 2 & 2 & 2 & 0 & 1 & 1 & 1 & 1 & 1 \\
    0 & 1 & 0 & 2 & 2 & 2 & 1 & 2 & 0 & 1 & 1 & 1 & 2 \\
    0 & 1 & 1 & 0 & 2 & 1 & 2 & 2 & 2 & 0 & 1 & 2 & 1 \\
    0 & 1 & 1 & 1 & 0 & 2 & 2 & 2 & 2 & 2 & 0 & 1 & 1 \\
    0 & 1 & 1 & 2 & 1 & 0 & 2 & 2 & 2 & 1 & 2 & 0 & 1 \\
    0 & 1 & 2 & 1 & 1 & 1 & 0 & 2 & 1 & 2 & 2 & 2 & 0
  \end{smallmatrix}\right)\!\!,
  \left(\begin{smallmatrix}
    0 & 0 & 2 & 2 & 2 & 2 & 2 & 0 & 1 & 1 & 1 & 1 & 1 \\
    0 & 1 & 0 & 2 & 2 & 1 & 2 & 2 & 0 & 1 & 1 & 2 & 1 \\
    0 & 1 & 1 & 0 & 2 & 2 & 1 & 2 & 2 & 0 & 1 & 1 & 2 \\
    0 & 1 & 1 & 1 & 0 & 2 & 2 & 2 & 2 & 2 & 0 & 1 & 1 \\
    0 & 1 & 2 & 1 & 1 & 0 & 2 & 2 & 1 & 2 & 2 & 0 & 1 \\
    0 & 1 & 1 & 2 & 1 & 1 & 0 & 2 & 2 & 1 & 2 & 2 & 0
  \end{smallmatrix}\right)\!\!,
  \left(\begin{smallmatrix}
    0 & 0 & 2 & 2 & 2 & 2 & 2 & 0 & 1 & 1 & 1 & 1 & 1 \\
    0 & 1 & 0 & 2 & 2 & 1 & 2 & 2 & 0 & 1 & 1 & 2 & 1 \\
    0 & 1 & 1 & 0 & 2 & 2 & 2 & 2 & 2 & 0 & 1 & 1 & 1 \\
    0 & 1 & 1 & 1 & 0 & 2 & 1 & 2 & 2 & 2 & 0 & 1 & 2 \\
    0 & 1 & 2 & 1 & 1 & 0 & 2 & 2 & 1 & 2 & 2 & 0 & 1 \\
    0 & 1 & 1 & 1 & 2 & 1 & 0 & 2 & 2 & 2 & 1 & 2 & 0
  \end{smallmatrix}\right)\!\!.
$$
Obviously, applying permutations to the columns, the rows, or the symbols $\{0,1,2\}$ does not change the property of being trifferent, so that we speak of equivalent matrices if they arise
by a sequence of the mentioned operations. With this, two trifferent codes are called equivalent, if their representing matrices are (where the ordering of the columns is arbitrary). Here we are only
interested in non-equivalent codes and choose the lexicographic minimal matrix, when reading columnwise from left to right, as a unique representative of an equivalence class of codes. For our example 
the representatives are given by:
$$
  \left(\begin{smallmatrix}
    0 & 0 & 0 & 0 & 0 & 1 & 1 & 1 & 1 & 1 & 2 & 2 & 2\\
    0 & 0 & 0 & 1 & 2 & 0 & 1 & 1 & 1 & 2 & 0 & 1 & 2\\
    0 & 0 & 0 & 1 & 2 & 1 & 2 & 2 & 2 & 0 & 2 & 0 & 1\\
    0 & 1 & 2 & 0 & 1 & 1 & 0 & 1 & 2 & 0 & 0 & 1 & 2\\
    0 & 1 & 2 & 1 & 2 & 2 & 2 & 0 & 1 & 1 & 1 & 2 & 0\\
    0 & 1 & 2 & 2 & 0 & 0 & 1 & 2 & 0 & 2 & 2 & 0 & 1
  \end{smallmatrix}\right)\!\!,
  \left(\begin{smallmatrix}
    0 & 0 & 0 & 0 & 0 & 1 & 1 & 1 & 2 & 2 & 2 & 2 & 2\\
    0 & 0 & 0 & 1 & 2 & 0 & 1 & 2 & 0 & 1 & 2 & 2 & 2\\
    0 & 0 & 1 & 1 & 2 & 1 & 2 & 0 & 2 & 0 & 0 & 1 & 1\\
    0 & 1 & 2 & 0 & 0 & 1 & 2 & 0 & 1 & 1 & 2 & 0 & 1\\
    0 & 1 & 2 & 1 & 2 & 2 & 0 & 1 & 1 & 2 & 1 & 2 & 0\\
    0 & 1 & 2 & 2 & 0 & 0 & 1 & 2 & 2 & 0 & 0 & 1 & 2
  \end{smallmatrix}\right)\!\!,
  \left(\begin{smallmatrix}
    0 & 0 & 0 & 0 & 0 & 1 & 1 & 1 & 1 & 1 & 2 & 2 & 2\\
    0 & 0 & 0 & 1 & 2 & 0 & 1 & 2 & 2 & 2 & 0 & 1 & 2\\
    0 & 0 & 1 & 0 & 2 & 0 & 2 & 1 & 2 & 2 & 2 & 1 & 0\\
    0 & 1 & 0 & 1 & 2 & 2 & 0 & 1 & 0 & 1 & 1 & 2 & 0\\
    0 & 1 & 1 & 2 & 1 & 2 & 1 & 2 & 2 & 0 & 1 & 0 & 2\\
    0 & 1 & 2 & 0 & 0 & 2 & 2 & 0 & 1 & 2 & 2 & 1 & 0
  \end{smallmatrix}\right)\!\!.
$$
If we interprete every column as the base-3-representation of an integer we get an even more compact representation: 
\begin{itemize}
  \item $\{0,13,26,113,231,285,385,389,399,410,545,582,694\}$, 
  \item $\{0,13,53,113,222,285,397,410,554,582,669,682,686\}$,
  \item $\{0,13,32,96,237,269,383,447,466,470,554,613,654\}$.     
\end{itemize}

Using the lexicographic ordering we can apply \emph{orderly generation} \cite{read1978every}, i.e., starting from an empty list of codewords we iteratively add 
one codeword at each step such that the code $C$ remains lexicographic minimal and trifferent. In Table~\ref{table_no_equivalent_trifferent_codes_n_le_6} we have 
listed the number $a(n,l)$ of non-equivalent trifferent codes $C$ with length $n$ and cardinality $l$ for all $n\le 6$. Clearly, we have 
$a(n,1)=1$ and $a(n,2)=n$ -- counting the number of possibilities of the Hamming distance $d_H(c,c'):=\#\left\{1\le i\le n \mid c_i\neq c_i'\right\}$ between two 
different codewords of length $n$.\footnote{It is an easy exercise to verify $a(n,3)=n\cdot\left(n^2 + 5\right)/6$.} The count $a(6,13)=3$ was also obtained in \cite{della2022maximum}.

\begin{table}[htp]
  \begin{center}
    \begin{tabular}{rrrrrrrrrrrrrr}
      \hline
      $n \,\backslash\, \# C\!=\!l$ & 1 & 2 & 3 & 4 & 5 & 6 & 7 & 8 & 9 & 10 & 11 & 12 & 13 \\
      \hline
      1 & 1 & 1 &  1 &    \\ 
      2 & 1 & 2 &  3 &  1 \\
      3 & 1 & 3 &  7 &  7 & 2 & 1 \\ 
      4 & 1 & 4 & 14 & 35 & 38 & 25 & 3 & 2 & 1 \\ 
      5 & 1 & 5 & 25 & 141 & 613 & 1410 & 944 & 269 & 55 & 5 \\ 
      6 & 1 & 6 & 41 & 499 & 8038 & 99612 & 486122 & 727128 & 339695 & 63781 & 4832 & 93 & 3 \\
      \hline   
    \end{tabular}  
    \caption{Number $a(n,l)$ of non-equivalent trifferent codes $C$ with length $n$ and cardinality $l$.}
    \label{table_no_equivalent_trifferent_codes_n_le_6}
  \end{center}
\end{table}

\subsection{Upper bounds involving the minimum Hamming distance}
\label{subsec_hamming}
Observing that the three non-equivalent trifferent codes with length $6$ and cardinality $13$ have minimum Hamming distance $d_H(C):=\min\left\{d_H(c,c')\mid c,c'\in C,c\neq c'\right\}=3$
we define $T(n,d)$ as the maximum cardinality of a trifferent code with length $n$ and minimum Hamming distance $d$. From our classification of trifferent codes with length $n\le 6$ 
we conclude the values listed in Table~\ref{table_ub_per_minimum_distance}. We observe that for a relatively large minimum Hamming distance $T(n,d)$ cannot be too large due to 
coding theoretic upper bounds on the maximum cardinality of a $3$-ary code with minimum Hamming distance at least $d$, see e.g.\ \cite{brouwer1998bounds}. Especially, we have $T(n,n)=3$ and 
$T(n,d)=1$ for $d>n$. However, if the minimum Hamming distance $d$ is rather small, then also cardinality $T(n)$ (marked in bold face) cannot be attained.     

\begin{table}[htp]
  \begin{center}
    \begin{tabular}{rrrrrrr}
      \hline
      $n \,\backslash\, d$ & 1 & 2 & 3 & 4 & 5 & 6 \\
      \hline
      1 &  \textbf{3} &  1 &  1 & 1 & 1 & 1 \\ 
      2 &  \textbf{4} &  3 &  1 & 1 & 1 & 1 \\
      3 &  5 &  \textbf{6} &  3 & 1 & 1 & 1 \\ 
      4 &  7 &  8 &  \textbf{9} & 3 & 1 & 1 \\ 
      5 & \textbf{10} & \textbf{10} &  9 & 6 & 3 & 1 \\ 
      6 & 11 & 12 & \textbf{13} & 10 & 4 & 3 \\
      \hline   
    \end{tabular}  
    \caption{Maximum cardinality $T(n,d)$ of a trifferent code with length $n$ and minimum Hamming distance $d$.}
    \label{table_ub_per_minimum_distance}
  \end{center}
\end{table}

Our next aim is to determine $T(n,1)$.
\begin{lemma}
  \label{lemma_hamming_distance_1}
  Let $C$ be a trifferent code with length $n$ and $c\in C$ be arbitrary. Then, 
  $$
    C':=\left\{(c,0),(c,1)\right\} \cup \left\{(c',2)\mid c'\in C\backslash \{c\}\right\}
  $$
  is a trifferent code with length $n+1$, minimum Hamming distance $1$, and cardinality $\# C+1$.
\end{lemma}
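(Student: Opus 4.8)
The plan is to verify the three asserted properties of $C'$ — being trifferent, having minimum Hamming distance $1$, and having cardinality $\#C+1$ — separately, with the trifferent property reduced to a short case analysis according to how many of three chosen codewords lie in the pair $\{(c,0),(c,1)\}$.

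First I would dispose of the cardinality and the distance claims, which are almost immediate. The set $\{(c',2)\mid c'\in C\setminus\{c\}\}$ has $\#C-1$ codewords, all distinct because the underlying $c'$ are distinct, and all distinct from $(c,0)$ and $(c,1)$ since their first $n$ coordinates equal some $c'\neq c$; adjoining $(c,0)$ and $(c,1)$, which differ in the last coordinate, yields $\#C+1$ distinct codewords of length $n+1$. Since $(c,0)$ and $(c,1)$ agree on the first $n$ coordinates and differ only in the last, $d_H\bigl((c,0),(c,1)\bigr)=1$, and as no two distinct codewords can be closer, $d_H(C')=1$.

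For the trifferent property I would fix three distinct codewords of $C'$ and branch on how many belong to $\{(c,0),(c,1)\}$. If none do, the three are $(c_1',2),(c_2',2),(c_3',2)$ with distinct $c_i'\in C\setminus\{c\}$, and a coordinate $i\le n$ on which $c_1',c_2',c_3'$ all differ — guaranteed since $C$ is trifferent — also triffers the three extensions. If exactly one does, say $(c,0)$ together with $(c_1',2),(c_2',2)$, then $c,c_1',c_2'$ are three distinct codewords of $C$, so triffering of $C$ supplies a coordinate $i\le n$ where they all differ, and this $i$ triffers the extensions irrespective of the appended symbols. If exactly two do, the three codewords are $(c,0),(c,1),(c',2)$, and here the appended coordinate $n+1$ carries the pairwise distinct symbols $0,1,2$ and does the work. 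The possibility of three codewords coming from a two-element set is vacuous, so all cases are covered.

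The argument is essentially routine; the one point deserving care is the second case, where one must resist the temptation to use the final coordinate — on which $(c,0)$ shows $0$ while both $(c_1',2)$ and $(c_2',2)$ show $2$ — and instead note that $(c,0)$ and $(c,1)$ project onto the single original codeword $c$, so that triffering in $C$ applies to the triple $\{c,c_1',c_2'\}$. The design of appending $\{0,1\}$ to the doubled codeword and $2$ to the rest is precisely what makes the critical two-in-the-pair case succeed through the new coordinate, while leaving the remaining cases to inherit triffering from $C$.
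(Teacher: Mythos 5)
Your proof is correct and follows essentially the same route as the paper's: both reduce the trifferent property to a case split on how many of the three codewords have last coordinate different from $2$ (at most one, so triffering is inherited from $C$ via the distinct projections; or both of $(c,0),(c,1)$, so the last coordinate shows $0,1,2$), with the cardinality and distance claims read off directly from the construction. Your three-case version merely separates the paper's ``at most one'' case into ``none'' and ``exactly one,'' and your explicit remark that in the latter case the projections $c,c_1',c_2'$ are three distinct elements of $C$ is exactly the point the paper's argument relies on.
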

\begin{proof}
  Let $x',y',z'$ be three different elements from $C'$ and $x,y,z$ be the corresponding codewords after removing the last coordinate. 
  If the last coordinate of at most one element of $x',y',z'$ is  not equal to $2$, then $x,y,z$ are different elements in $C$ and $\{x,y,z\}$ 
  is trifferent, by assumption, and so is $\{x',y',z'\}$. If the last coordinate of at least two elements of $x',y',z'$ is  not equal to $2$, then 
  $x',y',z'$ differ in the last coordinate. Thus, $C'$ is trifferent. The other two statements on the cardinality and minimum Hamming distance of $C'$ 
  directly follow from the construction. 
\end{proof}

\begin{corollary}
  For each $n\ge 2$ we have $T(n)\ge T(n,1)\ge T(n-1)+1$.
\end{corollary}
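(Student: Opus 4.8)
The plan is to establish the stated chain of inequalities by treating the two inequalities separately, working from right to left, since essentially all the content sits in the right-hand one and the left-hand one is purely definitional.

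First I would prove $T(n,1) \ge T(n-1)+1$, which is an immediate consequence of Lemma~\ref{lemma_hamming_distance_1}. Fix $n \ge 2$ and let $C$ be an optimal trifferent code of length $n-1$, so that $\#C = T(n-1)$; such a code exists because $n-1 \ge 1$, and it is nonempty since $T(n-1) \ge 1$. Picking any codeword $c \in C$ and forming the code $C'$ of the lemma, I may invoke the lemma to conclude that $C'$ is a trifferent code of length $n$ with minimum Hamming distance $1$ and cardinality $\#C + 1 = T(n-1)+1$. As $C'$ is exactly the kind of object over which the maximum defining $T(n,1)$ is taken, this yields $T(n,1) \ge \#C' = T(n-1)+1$.

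Next I would dispatch $T(n) \ge T(n,1)$ by a trivial monotonicity argument. Any trifferent code of length $n$ with minimum Hamming distance $1$ is in particular a trifferent code of length $n$, hence contributes to the maximum defining $T(n)$; so the maximum over the smaller family of distance-$1$ codes cannot exceed the maximum over all trifferent codes of that length, giving $T(n) \ge T(n,1)$.

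I do not expect a genuine obstacle, as the difficult construction has already been carried out in Lemma~\ref{lemma_hamming_distance_1} and the corollary only records its consequence. The two points meriting a moment's care are (i) checking that the hypothesis $n \ge 2$ guarantees $n-1 \ge 1$, so that both a length-$(n-1)$ trifferent code and a codeword $c$ to extend genuinely exist, and (ii) confirming that $C'$ really lands in the family counted by $T(n,1)$ rather than one of larger minimum distance — which is immediate, since $(c,0)$ and $(c,1)$ differ only in the last coordinate and so pin the minimum Hamming distance at exactly $1$.
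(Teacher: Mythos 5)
Your proposal is correct and takes essentially the same approach the paper intends: the corollary is stated there without an explicit proof precisely because it follows immediately from Lemma~\ref{lemma_hamming_distance_1} applied to an optimal trifferent code of length $n-1$, combined with the trivial monotonicity $T(n)\ge T(n,1)$. Your two points of care (existence of the length-$(n-1)$ code for $n\ge 2$, and that $C'$ has minimum distance exactly $1$ via $(c,0)$ and $(c,1)$) are exactly the right details to check and are handled correctly.
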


\begin{theorem}
  \label{thm_hamming_distance_1}
  For each $n\ge 2$ we have $T(n,1)=T(n-1)+1$.  
\end{theorem}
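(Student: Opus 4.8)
The lower bound $T(n,1)\ge T(n-1)+1$ is exactly the Corollary preceding this theorem, so the entire task is to establish the matching upper bound $T(n,1)\le T(n-1)+1$. The plan is to take an optimal code, i.e.\ a trifferent code $C$ of length $n$ with $d_H(C)=1$ and $\#C=T(n,1)$, and to extract from it a trifferent code of length $n-1$ whose cardinality is essentially $\#C-1$. First I would use a pair realizing the minimum distance: there are $c,c'\in C$ with $d_H(c,c')=1$, and after applying the symmetry operations (permute the coordinates so that $c,c'$ differ in the last one, then relabel the symbols coordinatewise) we may assume $c=(w,0)$ and $c'=(w,1)$ for some $w\in\{0,1,2\}^{n-1}$.

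The key structural observation is that this collapses almost all of $C$ into a single ``layer''. Indeed, for any third codeword $z=(u,t)$ the triple $\{c,c',z\}$ needs a coordinate in which all three differ; since $c$ and $c'$ agree on each of the first $n-1$ coordinates, the witness must be the last coordinate, which forces $t=2$. Hence every codeword other than $c,c'$ has last entry $2$, so $C=\{(w,0),(w,1)\}\cup\{(u,2)\mid u\in D\}$ for some $D\subseteq\{0,1,2\}^{n-1}$, and since the last entry is constant on the layer we get $\#C=2+\#D$. I would then verify that $D\cup\{w\}$ is a trifferent code of length $n-1$: three points of $D$ lift to codewords $(u_i,2)$ that agree in the last coordinate, so their witness lies among the first $n-1$ coordinates; and a triple made of $w$ together with two points $u_1,u_2\in D$ lifts to $c=(w,0),(u_1,2),(u_2,2)$, whose last entries $0,2,2$ again force the witness into the first $n-1$ coordinates. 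When $(w,2)\notin C$ we have $w\notin D$, so $\#(D\cup\{w\})=\#D+1=\#C-1\le T(n-1)$ and the bound follows.

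The hard part will be the leftover case $(w,2)\in C$, where $w\in D$ and the set $D\cup\{w\}$ gains no new point, leaving only the weaker estimate $\#C-2\le T(n-1)$. I would resolve this by showing the case is essentially degenerate: if some $u'\in D$ with $u'\neq w$ existed, then the three distinct codewords $(w,0),(w,2),(u',2)$ would have last entries $0,2,2$ and, in every coordinate $i\le n-1$, the values $w_i,w_i,u'_i$, so no coordinate separates all three, contradicting that $C$ is trifferent. Hence $(w,2)\in C$ forces $D=\{w\}$ and therefore $\#C=3$, which trivially satisfies $\#C\le T(n-1)+1$ since $T(n-1)\ge 3$ for $n\ge 2$. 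In both cases $\#C\le T(n-1)+1$, and combined with the Corollary this gives the claimed equality.
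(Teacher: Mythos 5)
Your proof is correct and follows essentially the same route as the paper: after normalizing the distance-$1$ pair, your set $D\cup\{w\}$ is precisely the projection of the paper's subcode $C_{\neg 1:0}$ (all codewords avoiding the symbol of the first distinguished word in the distinguished coordinate), whose size $\#C-1$ the paper bounds by $T(n-1)$ via the standard puncturing fact quoted in its introduction. The only difference is that you verify that fact by hand, including the degenerate case $(w,2)\in C$ where the projection fails to be injective and $\#C=3$, a point the paper leaves implicit in the generic bound $\#C_{\neg i:j}\le T(n-1)$.
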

\begin{proof}
  It remains to show $\#C\le T(n-1)+1$ for each trifferent code with length $n$ and minimum Hamming distance $1$. W.l.o.g.\ we assume 
  $c':=(0,\dots,0)\in C$ and $c'':=(1,0,\dots,0)\in C$, so that every other codeword $c$ has a $2$ in the first coordinate (since $\{c',c'',c\}$ is trifferent). 
  Thus, we have
  \begin{equation}
    \# C \le \# C_{\neg 1:0}+1\le T(n-1)+1.
  \end{equation}
\end{proof}

\begin{lemma} 
  \label{lemma_not_surjective}
  Let $d\ge 2$ and let $\pi\colon C\to \{0,1,2\}^d$ be the projection on the first $d$ coordinates. Suppose that $\pi$ is not surjective. Then $\#C \le T(n-d)\cdot \left(3^d-1\right)/2^d$.
\end{lemma}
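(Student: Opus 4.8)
The plan is to bound the global quantity $\#C$ in terms of the single block quantity $T(n-d)$ by averaging, in a weighted way, over all $3^d$ ways of forbidding one symbol in each of the first $d$ coordinates, and to let the one absent pattern buy the improvement from $(3/2)^d$ down to $(3^d-1)/2^d$.

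First I would normalize the missing pattern. Since permuting the three symbols inside a single coordinate preserves triffence (it preserves the property that three entries are pairwise distinct), I may assume the pattern absent from $\pi(C)$ is $\mathbf 0=(0,\dots,0)$. For $t\in\{0,1,2\}^d$ set $C_{\neg t}:=\{c\in C:\ c_i\neq t_i\ \text{for }1\le i\le d\}$, the $d$-coordinate analogue of the sets $C_{\neg i:j}$ behind (\ref{eq_simple}), and let $a_s:=\#\{c\in C:(c_1,\dots,c_d)=s\}$, so that $\#C=\sum_s a_s$, $a_{\mathbf 0}=0$, and $\#C_{\neg t}=\sum_{s:\,s_i\neq t_i\,\forall i}a_s$.

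The first key step is the per-leaf estimate $\#C_{\neg t}\le T(n-d)$ for every $t$ (here I use $n>d$, so that $T(n-d)\ge T(1)=3$). In $C_{\neg t}$ each of the first $d$ coordinates uses only the two values $\{0,1,2\}\setminus\{t_i\}$, so no three distinct codewords can all differ in a first-$d$ coordinate; hence any three must differ within the last $n-d$ coordinates, and deleting the first $d$ coordinates yields a trifferent code of length $n-d$. The only subtlety, which I expect to be the main technical obstacle, is that this deletion might identify two codewords; exactly as in the one-coordinate argument behind (\ref{eq_simple}), I would show this forces $\#C_{\neg t}\le 2$: if $c\neq c'$ agree on the last $n-d$ coordinates, then any third codeword $c''$ must resolve the triple inside the first $d$ coordinates, which forces $c''_i=t_i$ for some $i$ and hence $c''\notin C_{\neg t}$, so $C_{\neg t}=\{c,c'\}$. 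Thus whenever $\#C_{\neg t}\ge 3$ the deletion is injective and $\#C_{\neg t}\le T(n-d)$, while $\#C_{\neg t}\le 2\le T(n-d)$ otherwise.

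The second step is a weighting that converts these $3^d$ estimates into the claimed bound, and this is where the absent pattern is spent. I would weight the inequality for $t$ by $\lambda_t:=2^{1-d}$ when the number $z(t)=\#\{i:t_i=0\}$ of zero coordinates of $t$ is odd, and by $\lambda_t:=0$ otherwise. The crucial feature is the exact covering identity $\sum_t\lambda_t\,[\,s_i\neq t_i\ \forall i\,]=1$ for every $s\neq\mathbf 0$: the $t$ with $t_i\neq s_i$ for all $i$ and $z(t)=k$ number $\binom{d-z(s)}{k}2^{z(s)}$, so summing $2^{1-d}\binom{d-z(s)}{k}2^{z(s)}$ over odd $k$ gives $2^{\,z(s)+1-d}\cdot 2^{\,d-z(s)-1}=1$, using $\sum_{k\text{ odd}}\binom{m}{k}=2^{m-1}$ with $m=d-z(s)\ge 1$; for $s=\mathbf 0$ the sum is $0$, which is precisely why $a_{\mathbf 0}=0$ is exactly what is needed. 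Combining, $\#C=\sum_{s\neq\mathbf 0}a_s=\sum_{s}a_s\sum_t\lambda_t[\,s_i\neq t_i\,\forall i\,]=\sum_t\lambda_t\,\#C_{\neg t}\le\big(\sum_t\lambda_t\big)\,T(n-d)$, and a second binomial computation $\sum_t\lambda_t=2^{1-d}\sum_{k\text{ odd}}\binom{d}{k}2^{d-k}=2^{1-d}\cdot\frac{3^d-1}{2}=\frac{3^d-1}{2^d}$ (from expanding $(2\pm1)^d$) yields $\#C\le T(n-d)\,(3^d-1)/2^d$. Viewed through LP duality, the $\lambda_t$ are the optimal dual certificate for maximizing $\sum_{s\neq\mathbf 0}a_s$ subject to $\sum_{s\text{ avoiding }t}a_s\le T(n-d)$, and the uniform fibers $a_s=T(n-d)/2^d$ show the constant cannot be improved within this relaxation.
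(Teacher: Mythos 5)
Your proof is correct and is essentially the paper's own argument: your weights $\lambda_t=2^{1-d}$ on the patterns $t$ with an odd number of coordinates equal to the missing symbol reproduce exactly the paper's unweighted double count $\sum_{z\text{ odd}}\#C_z=2^{d-1}\cdot\#C\le T(n-d)\cdot\left(3^d-1\right)/2$, where the paper normalizes the missing word to $(1,\dots,1)$ and calls $z$ odd when $z_1+\dots+z_d$ is odd --- for ternary symbols this is the same parity condition as yours up to the relabeling $0\leftrightarrow 1$. The only substantive difference is that you explicitly verify the injectivity of deleting the first $d$ coordinates behind the estimate $\#C_{\neg t}\le T(n-d)$ (via the observation that a repeated tail forces some $c''_i=t_i$), a step the paper asserts without comment.
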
  
\begin{proof} 
W.l.o.g.\ the vector $(1,\dots,1)$ is not in the image of $\pi$. We call $z\in \{0,1,2\}^d$ \emph{odd} if $z_1+\dots+z_d$ is odd and \emph{even} if this sum is even. For 
$z$ in  $\{0,1,2\}^d$ define $B_z:=\left\{w\in \{0,1,2\}^d\,:\, w_i\neq z_i \,\forall i=1,\dots,d\right\}$   and   $C_z:=\left\{c\in C\,:\, \pi(c)\in B_z\right\}$. 
Observe that for any $w\in \{0,1,2\}^d$ unequal to $(1,\dots,1)$, there are $2^{d-1}$ sets $B_z$ that contain $w$ for which $z$ is odd (and equally many for $z$ even).  
Since $\#C_z \le T(n-d)$ for all $z$, we obtain 
$$
  2^{d-1}\cdot\#C= \sum_{z \text{ odd}} \#C_z \leq T(n-d) \cdot \left(3^d-1\right)/2
$$
since there are $\left(3^d-1\right)/2$ vectors $z$ that are odd.
\end{proof}

\begin{corollary} 
  \label{cor_ub_hamming}
  If a trifferent code $C$  of length $n$ has a pair at Hamming distance $d \ge 2$, then we have $\#C \le T(n-d)\cdot\left(3^d-1\right)/2^d$.
\end{corollary}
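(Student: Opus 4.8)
The plan is to deduce the corollary directly from \Cref{lemma_not_surjective}: a pair of codewords at Hamming distance $d$ will force the projection onto a suitable block of $d$ coordinates to be non-surjective, after which the lemma supplies the stated bound verbatim. Throughout I may normalize the code by equivalence operations—permuting coordinates and independently permuting the three symbols inside each coordinate—since these preserve triffer\-ence, the cardinality $\#C$, and both sides of the claimed inequality.

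First I would take the two codewords $c,c'\in C$ with $d_H(c,c')=d$ and, by a coordinate permutation, arrange that they differ in exactly the first $d$ coordinates and agree on coordinates $d+1,\dots,n$. Permuting symbols within each of the first $d$ coordinates, I may further assume $c_i=0$ and $c'_i=1$ for $1\le i\le d$. Let $\pi\colon C\to\{0,1,2\}^d$ be the projection onto these first $d$ coordinates.

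The key step is the following observation about any third codeword $w\in C\setminus\{c,c'\}$. By triffer\-ence the triple $\{c,c',w\}$ has a coordinate in which all three symbols differ; this coordinate cannot lie in $\{d+1,\dots,n\}$, where $c$ and $c'$ agree, so it lies in $\{1,\dots,d\}$, and there $w_i\notin\{c_i,c'_i\}=\{0,1\}$, forcing $w_i=2$. Hence every codeword other than $c,c'$ carries a symbol $2$ somewhere among its first $d$ coordinates, so its $\pi$-image lies outside $\{0,1\}^d$. Consequently the only vectors of $\{0,1\}^d$ that can occur in the image are $\pi(c)=(0,\dots,0)$ and $\pi(c')=(1,\dots,1)$. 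Because $d\ge 2$ makes $\#\{0,1\}^d=2^d\ge 4$, there is a vector $u\in\{0,1\}^d\setminus\{(0,\dots,0),(1,\dots,1)\}$, and $u\notin\pi(C)$; thus $\pi$ is not surjective. Applying \Cref{lemma_not_surjective} then gives $\#C\le T(n-d)\cdot(3^d-1)/2^d$, as required.

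I expect the only real subtlety to be the role of the hypothesis $d\ge 2$: it is exactly what guarantees the existence of the missing vector $u$. For $d=1$ the set $\{0,1\}^1$ consists only of the two images $\pi(c),\pi(c')$, the projection may well be surjective, and the reduction collapses—which is consistent with the distance-$1$ regime being treated separately in \Cref{thm_hamming_distance_1}. Beyond this, no obstacle arises, since all quantitative work (the parity counting over the sets $B_z$) is already packaged inside \Cref{lemma_not_surjective} and need not be repeated.
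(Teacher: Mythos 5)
Your proposal is correct and matches the paper's own proof: the same normalization of the distinguished pair, the same use of trifference to force a $2$ among the first $d$ coordinates of every other codeword, and the same reduction to Lemma~\ref{lemma_not_surjective}. You are in fact slightly more explicit than the paper in spelling out why the triffering coordinate must lie among the first $d$ positions and why $d\ge 2$ guarantees a missing vector in $\{0,1\}^d$, which the paper leaves implicit.
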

\begin{proof}  
  W.l.o.g.\ the two words at distance $d$ are $0\dots00\dots0$ and $1\dots10\dots0$ in $\{0,1,2\}^{d+(n-d)}$. Then every other codeword $c\in C$ must have a $2$ in the first $d$ 
  positions because of the trifferent property. So $\pi$ is not surjective and we can apply Lemma~\ref{lemma_not_surjective}.
\end{proof}

Since the convergence of $\tfrac{3^d-1}{2^d}$ towards $\tfrac{3^d}{2^d}$ is too fast we even cannot deduce an improved upper bound of the form $T(n)\le c\cdot \left(\tfrac{3}{2}\right)^n/\log\log n$ 
for some constant $c>0$ and sufficiently large $n$.  

\subsection{Upper bounds involving the number of occurrences of symbols}
For a given trifferent code $C$ with length $n\ge 2$ let $s_{i,j}$ be the number of codewords $c\in C$ with $c_i=j$, where $1\le i\le n$ and $j\in\{0,1,2\}$. For each triple of pairwise different symbols 
$j_1,j_2,j_3\in\{0,1,2\}$ we have $s_{i,j_1}+s_{i,j_2}=\#C_{\neg i: j_3}\le T(n-1)$, so that $\# C=s_{i,j_1}+s_{i,j_2}+s_{i,j_3}$ implies 
\begin{equation} 
  \# C\le 2\cdot T(n-1)-s_{i,j_1}. \label{ie_max_occurence}
\end{equation}
Let $\tau=\tau(C)$ denote the maximum value of the sums $s_{i,j_1}+s_{i,j_2}$ and $i^\star\!,j_1^\star,j_2^\star,j_3^\star$ be such that $\tau=s_{i^\star\!,j_1^\star}+s_{i^\star\!,j_2^\star}$, 
$\left\{j_1^\star,j_2^\star,j_3^\star\right\}=\{0,1,2\}$. With this, let $C'$ arise from $C_{\neg i^\star:j_3^\star}$ by removing the $i^\star$th coordinates from the codewords, i.e., 
$C'$ is a trifferent code with length $n-1$ and cardinality $\tau$.  
 
As the numbers $a(n,l)$ of non-equivalent trifferent codes with length $n$ and cardinality $l$ grow very quickly, see Table~\ref{table_no_equivalent_trifferent_codes_n_le_6}, 
we want restrict our exhaustive search to rather large values of $l$. To this end we start from a trifferent code $C'$ with length $n-1$ and cardinality $\tau$, fill the $n$th coordinate with all $2^\tau$ 
choices of symbols in $\{0,1\}$, and iteratively extend by codewords have a $2$ in the $n$th coordinate such that the code remains trifferent. Starting from an arbitrary trifferent code $C$ of length $n$ to
choose $C'$ as described above, the list of constructed trifferent codes of length $n$ contains a code that is equivalent to $C$. Assuming that $\# C'=\tau(C)$ we can upper bound the cardinality 
of $C$ by 
\begin{equation}
  \#C =\frac{1}{2}\cdot \left(s_{1,0}+s_{1,1}\,\,+\,\,s_{1,0}+s_{1,2}\,\,+\,\,s_{1,1}+s_{1,2}\right)\le \left\lfloor\frac{3\tau}{2}\right\rfloor = \left\lfloor\frac{3\# C'}{2}\right\rfloor.
\end{equation}
So, if we want to construct all trifferent codes with length $9$ and cardinality at least $28$, then it suffices to start with all trifferent codes of length $8$ and cardinality at least $19$. The latter 
can be constructed from trifferent codes with length $7$ and cardinality at least $13$ and those can in turn be constructed from all trifferent codes of length $6$ and cardinality at least $9$.

Given the parameter $\tau=\# C'$ we can slightly improve Inequality~(\ref{ie_max_occurence}) to
\begin{equation}
  \# C \le 2\tau -s_{i,j_1}.
\end{equation}
While we do not know the exact value of $s_{i,j_1}$ in the intermediate steps of the generation of the codes $C$, we always have lower bounds for $s_{i,j_1}$ which we use to cut the search trees. 
As computational results we have obtained
\begin{itemize}
  \item $a(7,13)=22736583$, $a(7,14)=429602$, $a(7,15)=2164$, $a(7,16)=3$, $a(7,17)=0$; 
  \item $a(8,19)=38581$, $a(8,20)=57$, $a(8,21)=0$;
  \item $a(9,28)=0$,
\end{itemize}  
which implies:
\begin{theorem}
  $T(7)=16$, $T(8)=20$, $T(9)\le 27$, and $T(n)\le 0.6937\cdot \left(\tfrac{3}{2}\right)^n$ for $n\ge 10$.
\end{theorem}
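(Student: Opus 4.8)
The plan is to read the theorem off the exhaustive enumeration data stated just above, splitting it into an existence part (lower bounds) and a non-existence part (upper bounds), and then to bootstrap the asymptotic estimate from these finitely many exact values via the floor recursion~(\ref{ie_floor}). Since every trifferent code is equivalent to exactly one lexicographically minimal representative counted by $a(n,l)$, a count $a(n,l)>0$ certifies a code of length $n$ and cardinality $l$, while $a(n,l)=0$ certifies that none exists. Thus $a(7,16)=3$ and $a(7,17)=0$ give $16\le T(7)<17$, i.e.\ $T(7)=16$; $a(8,20)=57$ and $a(8,21)=0$ give $T(8)=20$; and $a(9,28)=0$ gives $T(9)\le 27$. (The matching lower bound $T(9)\ge 27$ comes from the explicit linear code referenced in the introduction and is not needed here.)

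For the asymptotic bound I would iterate~(\ref{ie_floor}) from $T(9)\le 27$. One step yields $T(10)\le\left\lfloor\tfrac{3\cdot 27}{2}\right\rfloor=40$, and since $0.6937\cdot\left(\tfrac{3}{2}\right)^{10}\approx 40.003\ge 40$ the case $n=10$ holds. For $n>10$ the weaker recursion $T(n)\le\tfrac{3}{2}\,T(n-1)$ propagates the bound by induction: $T(n)\le\tfrac{3}{2}\cdot 0.6937\cdot\left(\tfrac{3}{2}\right)^{n-1}=0.6937\cdot\left(\tfrac{3}{2}\right)^{n}$. In effect the constant $0.6937$ is just $40/(3/2)^{10}$ rounded up to four decimals, so that the single sharp input $T(10)\le 40$ calibrates the entire tail.

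The genuinely hard part is not this bookkeeping but the exhaustiveness of the search behind $a(7,17)=0$, $a(8,21)=0$, and especially $a(9,28)=0$: one must be certain the orderly generation has visited a representative of every equivalence class of sufficiently large codes. This is where the reduction via $\tau$ carries the load --- a hypothetical length-$9$ code with at least $28$ codewords would, through the projection-and-extension argument, descend from a length-$8$ code of cardinality at least $19$ (and so on down to length $6$), so it suffices to grow codes upward only from the enumerated large-cardinality families rather than from all codes. Certifying that this pruning loses nothing, and that the search indeed terminates with no length-$9$ code of size $28$, is the crux; everything else in the theorem follows immediately.
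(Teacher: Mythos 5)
Your proposal is correct and follows essentially the same route as the paper: the exact values and the bound $T(9)\le 27$ are read off from $a(7,16)=3$, $a(7,17)=0$, $a(8,20)=57$, $a(8,21)=0$, and $a(9,28)=0$ (whose exhaustiveness rests, exactly as you say, on the $\tau$-based descent $28\to 19\to 13\to 9$ down to the fully classified length-$6$ codes), and the tail bound comes from one application of~(\ref{ie_floor}) giving $T(10)\le\left\lfloor\tfrac{3\cdot 27}{2}\right\rfloor=40\le 0.6937\cdot\left(\tfrac{3}{2}\right)^{10}$ followed by induction via $T(n)\le\tfrac{3}{2}\,T(n-1)$. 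You also correctly identify that the constant $0.6937$ is calibrated as $40/(3/2)^{10}$ rounded up, and that the lower bound $T(9)\ge 27$ is supplied separately by the linear code and is not part of this statement.
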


We remark that extending the $38638$ non-equivalent trifferent codes with length $8$ and cardinality at least $19$ yields just $44$ non-equivalent trifferent codes with length $9$, cardinality $25$ and 
none with cardinality $26$. However, one can easily construct a linear trifferent code of length $9$ and cardinality $27$, as we will explain in the next section, so that $T(9)=27$:
$$
C=\left(\begin{smallmatrix}
0 & 0 & 0 & 0 & 0 & 0 & 0 & 0 & 0 & 1 & 1 & 1 & 1 & 1 & 1 & 1 & 1 & 1 & 2 & 2 & 2 & 2 & 2 & 2 & 2 & 2 & 2\\
0 & 0 & 0 & 1 & 1 & 1 & 2 & 2 & 2 & 0 & 0 & 0 & 1 & 1 & 1 & 2 & 2 & 2 & 0 & 0 & 0 & 1 & 1 & 1 & 2 & 2 & 2\\
0 & 0 & 0 & 1 & 1 & 1 & 2 & 2 & 2 & 1 & 1 & 1 & 2 & 2 & 2 & 0 & 0 & 0 & 2 & 2 & 2 & 0 & 0 & 0 & 1 & 1 & 1\\
0 & 0 & 0 & 1 & 1 & 1 & 2 & 2 & 2 & 2 & 2 & 2 & 0 & 0 & 0 & 1 & 1 & 1 & 1 & 1 & 1 & 2 & 2 & 2 & 0 & 0 & 0\\
0 & 1 & 2 & 0 & 1 & 2 & 0 & 1 & 2 & 0 & 1 & 2 & 0 & 1 & 2 & 0 & 1 & 2 & 0 & 1 & 2 & 0 & 1 & 2 & 0 & 1 & 2\\
0 & 1 & 2 & 0 & 1 & 2 & 0 & 1 & 2 & 1 & 2 & 0 & 1 & 2 & 0 & 1 & 2 & 0 & 2 & 0 & 1 & 2 & 0 & 1 & 2 & 0 & 1\\
0 & 1 & 2 & 0 & 1 & 2 & 0 & 1 & 2 & 2 & 0 & 1 & 2 & 0 & 1 & 2 & 0 & 1 & 1 & 2 & 0 & 1 & 2 & 0 & 1 & 2 & 0\\
0 & 1 & 2 & 1 & 2 & 0 & 2 & 0 & 1 & 0 & 1 & 2 & 1 & 2 & 0 & 2 & 0 & 1 & 0 & 1 & 2 & 1 & 2 & 0 & 2 & 0 & 1\\
0 & 1 & 2 & 2 & 0 & 1 & 1 & 2 & 0 & 0 & 1 & 2 & 2 & 0 & 1 & 1 & 2 & 0 & 0 & 1 & 2 & 2 & 0 & 1 & 1 & 2 & 0
\end{smallmatrix}\right)
$$
We can computationally check that $C_{\neg i:j}$ has cardinality $18$ for all $1\le i\le 9$, $j\in\{0,1,2\}$ (which can be deduced from the 
fact that $C$ is linear with dual minimum distance of at least $2$). Repeating the process a second time we can computationally check 
that all resulting trifferent codes of length $7$ have cardinality $12$ (which can also be deduced from the 
fact that $C$ is linear with dual minimum distance of at least $3$, i.e., that the code is projective). So, clearly the search described above could not find a 
code equivalent to $C$. However, we can apply the extension search to all $C_{\neg i:j}$ after removing the $i$th coordinates. As a result we have obtained 
$166$, $39$, and $11$ non-equivalent trifferent codes with lengths $9$ and cardinalities $25$, $26$, and $27$, respectively.

\section{Linear trifferent codes}
\label{sec_linear}

We call a $q$-ary code $C$ that forms a linear subspace over $\mathbb{F}_q$ a \emph{linear} code. If $C$ has length $n$ and cardinality $q^k$, then 
we speak of an $[n,k]_q$-code. A non-zero codeword $c\in C$ is called \emph{minimal} if the \emph{support} $\supp(c):=\{i\mid c_i\neq 0\}$ of $c$ is minimal with respect to 
inclusion in the set $\left\{\supp(u) \mid u\in C\backslash\mathbf{0}\right\}$. The code $C$ is a \emph{minimal code} if all its non-zero codewords are minimal. 
In \cite{bishnoi2023blocking} it was shown that a linear $3$-ary code is minimal iff it is trifferent.

In contrast to Theorem~\ref{thm_hamming_distance_1} and Lemma~\ref{thm_hamming_distance_1} the minimum Hamming distance of a linear trifferent code needs to increase 
with the length of the codewords, i.e., for every minimal $[n,k]_q$-code we have $d\ge (k-1)(q-1)+1$ for the minimum Hamming distance $d$, see e.g.\ \cite[Theorem 2.8]{three_combinatorial_perspectives}, \cite[Theorem 23]{heger2021short}.  
Using the software \texttt{LinCode} for the enumeration of linear codes \cite{bouyukliev2021computer} we have classified the minimal $[n,k]_3$-codes with the minimum possible length $n$
for small $k$ up to symmetry. For dimension $k=1$ the smallest example is the trivial $[1,1]_3$ code with minimum distance $1$ and for $k=2$ the unique smallest example is a $[4,2]_3$ 
code with minimum distance $3$. Also for dimension $k=3$ there exists a unique smallest example\footnote{We remark that there are nine $[9,3]_3$-codes and one $[8,3]_3$-code 
with minimum distance at least $5$ -- only one of these codes is minimal.} given e.g.\ by the generator matrix
$$
  \begin{pmatrix}
  0 & 0 & 0 & 0 & 1 & 1 & 1 & 1 & 1 \\
  0 & 1 & 1 & 1 & 0 & 0 & 0 & 1 & 2 \\
  1 & 0 & 1 & 2 & 0 & 1 & 2 & 2 & 2
  \end{pmatrix}\!\!.
$$ 
The code has weight enumerator $1+6x^{5}+8x^{6}+12x^{7}$, i.e., its minimum distance is $5$, and its automorphism group has order $48$. All of these codes attain the maximum cardinality 
$T(n)=3^k$ of trifferent codes for their respective length $n$ and for $k=1,2$ even all such examples are linear while there are others for $k=3$. For dimension $k=4$ the minimum length
$n$ of a minimal $[n,k]_3$-code is $14$ and there are exactly three non-equivalent such codes given e.g.\ by the generator matrices
$$
  \begin{pmatrix}
    11111111101000 \\
    00011122210100 \\
    11101201220010 \\
    01221211200001
  \end{pmatrix}\!\!,
  \begin{pmatrix}
    11111111101000 \\
    00011112210100 \\
    01211121220010 \\
    10001220110001
  \end{pmatrix}\!\!,\text{ and }
  \begin{pmatrix}
    11111111101000 \\
    00011112210100 \\
    11202221220010 \\
    02010121020001
  \end{pmatrix}\!\!.
$$  
The corresponding weight enumerators are given by $1+14x^{7}+4x^{8}+20x^{9}+16x^{10}+26x^{11}$,  
$1+12x^{7}+12x^{8}+8x^{9}+24x^{10}+24x^{11}$, $1+12x^{7}+12x^{8}+8x^{9}+24x^{10}+24x^{11}$  and 
the orders of the corresponding automorphism groups are $16$, $12$, $48$, respectively. For larger 
values of $n$ the number of non-equivalent minimal $[n,4]_3$-codes grows quickly, i.e., there are $53$ 
minimal $[15,4]_3$-, $818$ minimal $[16,4]_3$-, $9266$ minimal $[17,4]_3$-, and $80999$ minimal $[18,4]_3$-codes. 
We remark that the three minimal $[14,4]_3$-codes all violate the \emph{Ashikhmin--Barg condition} $w_{\max}<w_{\min} \cdot\tfrac{q}{q-1}$ 
which is a sufficient condition on the maximum and minimum weight of a linear $q$-ary code to be minimal \cite{ashikhmin1998minimal}.   

For dimension $k = 5$ the minimum length $n$ of a minimal $[n,k]_3$-code is $19$ and up to equivalence there is a unique such code given e.g. by the generator matrix
$$
  \begin{pmatrix}
    1111111111100010000\\
    0001111222211101000\\
    1110111022201200100\\
    0122000122220000010\\
    1220012101212100001
  \end{pmatrix}\!\!.
$$
The corresponding weight enumerator is given by $1+26x^{9}+132x^{12}+84x^{15}$, i.e., the code is a $3$-divisible $3$-weight code, and the automorphism group has order $8$. 
(The support contains two full lines intersecting in a common point.) One dimension less,
there are $27282$ minimal $[18,4,9]_3$-codes. For the next case we report $263$ minimal $[19,4,11]_3$-, $49104$ minimal $[20,4,11]_3$-, and $1615534$ minimal $[21,4,11]_3$ codes. 
A minimal $[n,5,11]_3$-code with maximum column multiplicity has length at least $21$ and we have found $101$ non-isomorphic such $[21,5,11]_3$-codes.
None of them can be extended to a minimal $[22,6,11]_3$-code, so that $m(6,3)\ge 23$. Extending roughly 2\% of the $1615534$ minimal $[21,4,11]_3$ codes, we constructed
$2165971$ minimal $[22,5,11]_3$ codes. Extending $6784$ of the latter unfortunately has not resulted in a minimal $[23,6,11]_3$ code, while one might conjecture that such a code exist.   
We remark that the minimum lengths of minimal $[n,k]_3$-codes for $k\le 5$ have also been determined in \cite{bishnoi2023blocking}. 
Using integer linear programming computations it was also shown that a minimal $[n,6]_3$-code has length at least $22$ and an example with length $24$ was found. 

\medskip

To conclude this section we consider the well-known correspondence between (non-degenerated) $[n,k]_q$-codes and 
multisets of points in the projective space $\operatorname{PG}(k-1,q)$ of cardinality $n$, i.e., the columns of a 
generator matrix each generate a point, see e.g.\ \cite{dodunekov1998codes}.
\begin{definition}
  A multiset $M$ of points in a projective space is called a \emph{strong blocking multiset} if for every
  hyperplane $H$, we have $\langle S \cap H\rangle = H$.
\end{definition}
If $M$ is the multiset of points associated to a linear code $C$, then $C$ is minimal iff $M$ is a strong blocking multiset, see e.g.\ \cite{alfarano2022geometric,tang2021full}. 
Directly from the definition of a strong blocking multiset we can read off that a multiset of points in $\operatorname{PG(1,q)}$ is a strong blocking multiset 
iff it contains every point of the entire projective space. Clearly adding points to a multiset does not destroy the property of being a strong blocking multiset, 
so that we consider \emph{minimal strong blocking sets} in the following, i.e., set of points that are a strong blocking multiset but such that every proper 
subset is not a strong blocking multiset. So, in $\PG(1,q)$ the unique minimal strong blocking set is a line.\footnote{For completeness we remark that the corresponding 
minimal $[4,2]_3$-code has an automorphism group of order $48$.} As e.g.\ observed in \cite{alfarano2022geometric} 
a set of points in $\operatorname{PG(2,q})$ is a strong blocking set iff every line contains at least $2$ points, i.e., a $2$-fold blocking set (with respect to lines).
In $\operatorname{PG(2,3)}$ the complement of a minimal strong blocking set is an \emph{oval}, which, for $q=3$, coincides with a (projective) frame, i.e., a set of 
$4$ points each three of them spanning the entire space.\footnote{We said that it is {\lq\lq}easy{\rq\rq} to construct a minimal (or trifferent) $[9,3]_3$-code, since
we can start with the point set of the projective plane $\operatorname{PG(2,3)}$ and iteratively remove arbitrary points that are not contained on any secant. This process 
always stops after $4$ removals and we end up with a unique set of points (up to equivalence).} While there is an unique minimal strong blocking set in $\operatorname{PG}(2,3)$,
in $\operatorname{PG}(3,3)$ we have several non-equivalent such sets, see Table~\ref{table_minimal_strong_blocking_sets}. So, if we start from the set of $40$ points of the 
projective solid $\operatorname{PG}(3,3)$ and iteratively remove points without destroying the property of being a strong blocking set, we can delete between $26$ and $20$ points.

\begin{table}[htp]
  \begin{center}
    \begin{tabular}{lrrrrrrr}
      \hline
      $\mathbf{n}$ & 14 & 15 & 16 & 17 & 18 & 19 & 20 \\ 
      $\mathbf{\#}$ & 3 & 39 & 363 & 1517 & 3736 & 5791 & 5110 \\     
      \hline
    \end{tabular}
    \caption{Number of non-equivalent minimal strong blocking sets in $\operatorname{PG}(3,3)$ per cardinality $n$.}
    \label{table_minimal_strong_blocking_sets}  
  \end{center}
\end{table}
    
\section*{Acknowledgments}
The author thanks Gianira Alfarano, Anurag Bishnoi, Jozefien D'haeseleer, Dion Gijswijt, Alessandro Neri, Sven Polak, and Martin Scotti for many helpful remarks on an earlier version of this paper. 
Especially, Lemma~\ref{lemma_not_surjective} and Corollary~\ref{cor_ub_hamming} are stated with courtesy of Dion Gijswijt (simplifying and generalizing partial results of the author).     


\appendix

\section{Trifferent codes with small lengths and relatively large cardinalities}  

\setitemize{leftmargin=*}            
There exist $a(4,7)=3$  non-equivalent trifferent codes with length $4$ and cardinality $7$: 
{\tiny
\begin{itemize}
  \item $\{0,1,14,35,50,71,74\}$;\\[-4mm]
  \item $\{0,13,26,32,42,46,61\}$;\\[-4mm]
  \item $\{0,4,11,34,47,70,77\}$.
\end{itemize}
}
\noindent
There exist $a(4,8)=2$  non-equivalent trifferent codes with length $4$ and cardinality $8$: 
{\tiny
\begin{itemize}
  \item $\{0,13,26,32,42,46,61,65\}$;\\[-4mm]
  \item $\{0,4,11,34,47,70,77,78\}$.
\end{itemize}
}
\noindent
There exists an up to equivalence unique trifferent code with length $4$ and maximum cardinality $T(4)=9$:
{\tiny
\begin{itemize}
  \item $\{0,13,26,32,42,46,61,65,75\}$.
\end{itemize}
}
\noindent
The code is linear.

\bigskip
\bigskip    

There exist $a(5,10)=5$ non-equivalent trifferent codes with length $5$ and maximum cardinality $T(5)=10$:
{\tiny
\begin{itemize}
  \item $\{0,1,41,80,98,128,140,185,197,227\}$;\\[-4mm]
  \item $\{0,4,11,34,101,142,209,232,239,240\}$;\\[-4mm]
  \item $\{0,4,38,79,97,131,132,137,182,196\}$;\\[-4mm]
  \item $\{0,4,38,79,97,131,137,182,196,230\}$;\\[-4mm]
  \item $\{0,4,38,79,97,131,137,182,196,231\}$.
\end{itemize}
}

\bigskip
\bigskip

There exist $a(6,12)=93$ non-equivalent trifferent codes with length $6$ and cardinality $12$:
{\tiny
\begin{itemize}
  \item $\{0,13,113,123,223,227,289,399,431,566,586,696\}$;\\[-4mm]
  \item $\{0,13,26,113,123,223,289,308,399,586,659,696\}$;\\[-4mm]
  \item $\{0,13,26,113,231,285,385,389,399,410,545,582\}$;\\[-4mm]
  \item $\{0,13,26,113,231,285,385,389,399,410,545,586\}$;\\[-4mm]
  \item $\{0,13,26,113,231,285,385,389,399,410,545,694\}$;\\[-4mm]
  \item $\{0,13,26,113,231,285,385,389,410,545,586,696\}$;\\[-4mm]
  \item $\{0,13,26,32,123,289,385,389,399,420,586,707\}$;\\[-4mm]
  \item $\{0,13,26,32,123,289,466,470,480,640,663,707\}$;\\[-4mm]
  \item $\{0,13,26,32,96,223,227,237,289,447,586,626\}$;\\[-4mm]
  \item $\{0,13,26,32,96,223,227,289,447,480,586,626\}$;\\[-4mm]
  \item $\{0,13,26,32,96,223,289,447,470,480,586,626\}$;\\[-4mm]
  \item $\{0,13,26,32,96,223,289,447,470,586,626,723\}$;\\[-4mm]
  \item $\{0,13,26,32,96,223,289,447,480,586,626,713\}$;\\[-4mm]
  \item $\{0,13,26,32,96,262,383,447,613,709,713,723\}$;\\[-4mm]
  \item $\{0,13,26,32,96,262,464,640,690,709,713,723\}$;\\[-4mm]
  \item $\{0,13,26,32,96,289,447,466,470,480,586,626\}$;\\[-4mm]
  \item $\{0,13,26,32,96,289,447,466,470,480,586,707\}$;\\[-4mm]
  \item $\{0,13,26,32,96,289,447,466,470,586,626,723\}$;\\[-4mm]
  \item $\{0,13,32,107,123,289,304,389,399,586,654,697\}$;\\[-4mm]
  \item $\{0,13,32,107,123,304,399,461,586,654,697,716\}$;\\[-4mm]
  \item $\{0,13,32,107,232,285,308,385,399,582,626,694\}$;\\[-4mm]
  \item $\{0,13,32,96,127,237,269,466,470,554,620,654\}$;\\[-4mm]
  \item $\{0,13,32,96,134,153,311,457,480,613,670,707\}$;\\[-4mm]
  \item $\{0,13,32,96,134,262,461,475,554,613,654,723\}$;\\[-4mm]
  \item $\{0,13,32,96,223,269,289,447,635,639,670,704\}$;\\[-4mm]
  \item $\{0,13,32,96,223,269,289,480,635,670,681,704\}$;\\[-4mm]
  \item $\{0,13,32,96,223,269,370,480,635,670,681,704\}$;\\[-4mm]
  \item $\{0,13,32,96,223,269,383,447,470,480,613,670\}$;\\[-4mm]
  \item $\{0,13,32,96,223,269,451,480,635,670,681,704\}$;\\[-4mm]
  \item $\{0,13,32,96,237,269,289,447,466,641,654,697\}$;\\[-4mm]
  \item $\{0,13,32,96,237,269,383,447,466,470,554,613\}$;\\[-4mm]
  \item $\{0,13,32,96,237,269,383,447,466,470,554,654\}$;\\[-4mm]
  \item $\{0,13,32,96,237,311,370,411,626,690,709,713\}$;\\[-4mm]
  \item $\{0,13,53,113,222,285,397,410,554,582,669,682\}$;\\[-4mm]
  \item $\{0,13,53,113,222,285,410,554,582,669,682,686\}$;\\[-4mm]
  \item $\{0,4,11,115,236,290,394,401,402,412,547,587\}$;\\[-4mm]
  \item $\{0,4,11,115,236,290,394,401,412,547,587,699\}$;\\[-4mm]
  \item $\{0,4,11,115,236,290,394,402,412,547,587,698\}$;\\[-4mm]
  \item $\{0,4,11,124,239,293,385,398,421,556,590,699\}$;\\[-4mm]
  \item $\{0,4,11,124,240,294,385,398,421,556,591,698\}$;\\[-4mm]
  \item $\{0,4,11,34,101,304,425,628,695,718,725,726\}$;\\[-4mm]
  \item $\{0,4,119,239,290,380,416,483,554,590,610,680\}$;\\[-4mm]
  \item $\{0,4,119,240,290,380,416,482,555,591,610,681\}$;\\[-4mm]
  \item $\{0,4,38,101,149,218,322,358,421,455,636,695\}$;\\[-4mm]
  \item $\{0,4,38,101,149,322,358,393,421,461,699,707\}$;\\[-4mm]
  \item $\{0,4,38,101,150,218,322,358,421,456,635,695\}$;\\[-4mm]
  \item $\{0,4,38,101,150,322,358,392,421,461,698,708\}$;\\[-4mm]
  \item $\{0,4,38,104,286,294,384,392,455,551,655,668\}$;\\[-4mm]
  \item $\{0,4,38,104,286,303,311,428,591,632,655,695\}$;\\[-4mm]
  \item $\{0,4,38,124,241,293,383,421,461,555,591,682\}$;\\[-4mm]
  \item $\{0,4,38,124,241,293,383,421,554,591,668,682\}$;\\[-4mm]
  \item $\{0,4,38,124,241,293,383,421,555,591,668,682\}$;\\[-4mm]
  \item $\{0,4,38,124,241,293,384,421,461,555,590,682\}$;\\[-4mm]
  \item $\{0,4,38,124,241,293,384,421,555,590,668,682\}$;\\[-4mm]
  \item $\{0,4,38,124,241,294,383,421,461,555,591,682\}$;\\[-4mm]
  \item $\{0,4,38,124,241,294,383,421,555,591,668,682\}$;\\[-4mm]
  \item $\{0,4,38,50,97,205,267,464,474,537,623,646\}$;\\[-4mm]
  \item $\{0,4,38,50,97,231,267,448,464,565,618,623\}$;\\[-4mm]
  \item $\{0,4,38,51,124,241,293,421,461,554,590,627\}$;\\[-4mm]
  \item $\{0,4,38,51,97,205,266,465,473,536,623,646\}$;\\[-4mm]
  \item $\{0,4,38,51,97,230,266,448,465,565,617,623\}$;\\[-4mm]
  \item $\{0,4,38,79,131,230,294,340,380,555,668,682\}$;\\[-4mm]
  \item $\{0,4,38,79,131,294,340,380,473,636,668,682\}$;\\[-4mm]
  \item $\{0,4,38,79,131,312,425,439,473,583,618,623\}$;\\[-4mm]
  \item $\{0,4,38,79,131,312,425,439,473,583,618,704\}$;\\[-4mm]
  \item $\{0,4,38,79,131,312,425,439,473,618,623,664\}$;\\[-4mm]
  \item $\{0,4,38,79,131,312,425,439,473,618,664,704\}$;\\[-4mm]
  \item $\{0,4,38,79,131,312,425,439,473,636,664,704\}$;\\[-4mm]
  \item $\{0,4,38,79,97,131,137,294,473,618,668,682\}$;\\[-4mm]
  \item $\{0,4,38,79,97,131,299,456,623,668,682,716\}$;\\[-4mm]
  \item $\{0,4,38,79,97,131,299,456,623,668,682,717\}$;\\[-4mm]
  \item $\{0,4,38,79,97,132,299,455,623,668,682,716\}$;\\[-4mm]
  \item $\{0,4,38,79,97,132,299,455,623,668,682,717\}$;\\[-4mm]
  \item $\{0,4,38,79,97,137,230,294,374,668,682,717\}$;\\[-4mm]
  \item $\{0,4,38,79,97,230,294,374,380,668,682,717\}$;\\[-4mm]
  \item $\{0,4,38,79,97,230,294,374,623,668,682,717\}$;\\[-4mm]
  \item $\{0,4,38,79,97,231,293,375,380,668,682,716\}$;\\[-4mm]
  \item $\{0,4,38,79,97,231,293,375,623,668,682,716\}$;\\[-4mm]
  \item $\{0,4,38,79,97,293,421,618,623,668,682,716\}$;\\[-4mm]
  \item $\{0,4,38,79,97,294,421,617,623,668,682,716\}$;\\[-4mm]
  \item $\{0,4,38,79,97,299,421,623,668,682,716,717\}$;\\[-4mm]
  \item $\{0,4,38,97,131,267,473,542,618,646,655,695\}$;\\[-4mm]
  \item $\{0,4,38,97,131,267,473,565,618,623,668,682\}$;\\[-4mm]
  \item $\{0,4,38,97,131,294,299,403,591,668,682,716\}$;\\[-4mm]
  \item $\{0,4,38,97,131,294,322,380,473,591,655,695\}$;\\[-4mm]
  \item $\{0,4,38,97,146,221,267,455,474,591,682,695\}$;\\[-4mm]
  \item $\{0,4,38,97,146,222,266,456,473,590,682,695\}$;\\[-4mm]
  \item $\{0,4,38,97,159,266,448,465,473,590,655,695\}$;\\[-4mm]
  \item $\{0,4,38,97,230,266,375,565,623,668,682,717\}$;\\[-4mm]
  \item $\{0,4,38,97,230,266,389,448,465,565,617,668\}$;\\[-4mm]
  \item $\{0,4,38,97,230,266,389,448,465,565,618,668\}$;\\[-4mm]
  \item $\{0,4,38,97,266,308,421,618,623,682,707,717\}$;\\[-4mm]
  \item $\{0,4,38,97,294,299,421,590,632,682,707,717\}$.
\end{itemize}
}
\noindent
There exist $a(6,13)=3$ non-equivalent trifferent codes with length $6$ and maximum cardinality $T(6)=13$:
{\tiny
\begin{itemize}
  \item $\{0,13,26,113,231,285,385,389,399,410,545,582,694\}$;\\[-4mm]
  \item $\{0,13,32,96,237,269,383,447,466,470,554,613,654\}$;\\[-4mm]
  \item $\{0,13,53,113,222,285,397,410,554,582,669,682,686\}$.
\end{itemize}
}

\bigskip
\bigskip

There exist $a(7,16)=3$ non-equivalent trifferent codes with length $7$ and maximum cardinality $T(7)=16$:
{\tiny
\begin{itemize}
  \item $\{0,13,110,277,426,458,797,885,929,1258,1342,1455,1804,1841,1934,2004\}$;\\[-4mm]
  \item $\{0,13,110,277,610,662,798,887,1202,1419,1599,1723,1865,1924,2101,2124\}$;\\[-4mm]
  \item $\{0,13,110,277,663,801,878,1021,1203,1339,1424,1561,1761,2032,2081,2124\}$.
\end{itemize}
}

\bigskip
\bigskip

There exist $a(8,20)=57$ non-equivalent trifferent codes with length $8$ and maximum cardinality $T(8)=20$:
{\tiny
\begin{itemize}
  \item $\{0,13,110,1006,1202,2016,2074,2508,2588,3418,3539,3760,3821,3849,5074,5087,5206,5340,6094,6293\}$;\\[-4mm]
  \item $\{0,13,110,1006,1202,2016,2074,2508,2588,3418,3539,3760,3821,3849,5074,5087,5206,5343,6094,6293\}$;\\[-4mm]
  \item $\{0,13,110,1006,1202,2016,2074,2508,2588,3418,3539,3859,3872,4975,5036,5064,5206,5337,6094,6293\}$;\\[-4mm]
  \item $\{0,13,110,1006,1202,2016,2074,2508,2588,3418,3539,3859,3872,4975,5036,5064,5206,5340,6094,6293\}$;\\[-4mm]
  \item $\{0,13,110,1006,1202,2016,2074,2508,2588,3418,3539,3859,3872,4975,5036,5064,5206,5343,6094,6293\}$;\\[-4mm]
  \item $\{0,13,110,1006,1202,2016,2074,2508,2588,3421,3620,3760,3821,3849,5074,5087,5206,5337,6091,6212\}$;\\[-4mm]
  \item $\{0,13,110,1006,1202,2016,2074,2508,2588,3421,3620,3760,3821,3849,5074,5087,5206,5340,6091,6212\}$;\\[-4mm]
  \item $\{0,13,110,1006,1202,2016,2074,2508,2588,3421,3620,3760,3821,3849,5074,5087,5206,5343,6091,6212\}$;\\[-4mm]
  \item $\{0,13,110,1006,1202,2016,2074,2508,2588,3421,3620,3859,3872,4975,5036,5064,5206,5337,6091,6212\}$;\\[-4mm]
  \item $\{0,13,110,1006,1202,2016,2074,2508,2588,3421,3620,3859,3872,4975,5036,5064,5206,5340,6091,6212\}$;\\[-4mm]
  \item $\{0,13,110,1006,1202,2016,2074,2508,2588,3421,3620,3859,3872,4975,5036,5064,5206,5343,6091,6212\}$;\\[-4mm]
  \item $\{0,13,110,1006,1202,2022,2102,2502,2560,3418,3539,3760,3821,3849,5074,5087,5206,5340,6094,6293\}$;\\[-4mm]
  \item,$\{0,13,110,1006,1202,2022,2102,2502,2560,3418,3539,3760,3821,3849,5074,5087,5206,5343,6094,6293\}$;\\[-4mm]
  \item $\{0,13,110,1006,1202,2022,2102,2502,2560,3418,3539,3859,3872,4975,5036,5064,5206,5337,6094,6293\}$;\\[-4mm]
  \item $\{0,13,110,1006,1202,2022,2102,2502,2560,3418,3539,3859,3872,4975,5036,5064,5206,5340,6094,6293\}$;\\[-4mm]
  \item $\{0,13,110,1006,1202,2022,2102,2502,2560,3418,3539,3859,3872,4975,5036,5064,5206,5343,6094,6293\}$;\\[-4mm]
  \item $\{0,13,110,1006,1202,2022,2102,2502,2560,3421,3620,3760,3821,3849,5074,5087,5206,5337,6091,6212\}$;\\[-4mm]
  \item $\{0,13,110,1006,1202,2022,2102,2502,2560,3421,3620,3760,3821,3849,5074,5087,5206,5340,6091,6212\}$;\\[-4mm]
  \item $\{0,13,110,1006,1202,2022,2102,2502,2560,3421,3620,3859,3872,4975,5036,5064,5206,5337,6091,6212\}$;\\[-4mm]
  \item $\{0,13,110,1006,1202,2022,2102,2502,2560,3421,3620,3859,3872,4975,5036,5064,5206,5340,6091,6212\}$;\\[-4mm]
  \item $\{0,13,110,1006,1202,2022,2102,2533,2670,3421,3620,3859,3872,4975,5036,5064,5233,5256,6091,6212\}$;\\[-4mm]
  \item,$\{0,13,110,1006,1202,2022,2102,2533,2670,3421,3620,3859,3872,4975,5036,5064,5233,5337,6091,6212\}$;\\[-4mm]
  \item $\{0,13,110,1006,1202,2047,2178,2502,2560,3418,3539,3760,3821,3849,5074,5087,5181,5261,6094,6293\}$;\\[-4mm]
  \item $\{0,13,110,1006,1202,2047,2181,2502,2560,3418,3539,3760,3821,3849,5074,5087,5181,5261,6094,6293\}$;\\[-4mm]
  \item $\{0,13,110,1006,1202,2047,2181,2502,2560,3418,3539,3859,3872,4975,5036,5064,5181,5261,6094,6293\}$;\\[-4mm]
  \item $\{0,13,110,1006,1202,2047,2181,2502,2560,3421,3620,3760,3821,3849,5074,5087,5181,5261,6091,6212\}$;\\[-4mm]
  \item $\{0,13,110,1006,1202,2047,2184,2502,2560,3418,3539,3760,3821,3849,5074,5087,5181,5261,6094,6293\}$;\\[-4mm]
  \item $\{0,13,110,1015,1202,2004,2102,2499,2610,3409,3539,3760,3821,3849,4993,5060,5260,5310,6097,6239\}$;\\[-4mm]
  \item $\{0,13,110,1015,1202,2004,2102,2499,2610,3409,3539,3778,3845,4975,5036,5064,5260,5310,6097,6239\}$;\\[-4mm]
  \item $\{0,13,110,1015,1202,2004,2102,2499,2610,3424,3566,3760,3821,3849,4993,5060,5260,5310,6082,6212\}$;\\[-4mm]
  \item $\{0,13,110,1015,1202,2004,2102,2499,2610,3424,3566,3778,3845,4975,5036,5064,5260,5310,6082,6212\}$;\\[-4mm]
  \item $\{0,13,110,1015,1202,2004,2151,2499,2588,3424,3566,3760,3821,3849,4993,5060,5260,5283,6082,6212\}$;\\[-4mm]
  \item $\{0,13,110,1015,1202,2013,2102,2490,2637,3409,3539,3760,3821,3849,4993,5060,5260,5283,6097,6239\}$;\\[-4mm]
  \item $\{0,13,110,1015,1202,2101,2124,2490,2637,3409,3539,3778,3845,4975,5036,5064,5172,5261,6097,6239\}$;\\[-4mm]
  \item,$\{0,13,110,1015,1205,2019,2093,2151,2570,2670,3409,3539,3760,3849,4993,5039,5260,5283,6097,6239\}$;\\[-4mm]
  \item,$\{0,13,110,1015,1205,2019,2093,2151,2570,2670,3409,3539,3778,3824,4975,5064,5260,5283,6097,6239\}$;\\[-4mm]
  \item $\{0,13,110,1015,1205,2019,2093,2151,2570,2670,3424,3566,3760,3849,4993,5039,5260,5283,6082,6212\}$;\\[-4mm]
  \item $\{0,13,110,1015,1205,2019,2093,2151,2570,2670,3424,3566,3778,3824,4975,5064,5260,5283,6082,6212\}$;\\[-4mm]
  \item $\{0,13,110,1015,1205,2084,2184,2505,2579,2637,3409,3539,3760,3849,4993,5039,5260,5283,6097,6239\}$;\\[-4mm]
  \item $\{0,13,110,1015,1205,2084,2184,2505,2579,2637,3424,3566,3760,3849,4993,5039,5260,5283,6082,6212\}$;\\[-4mm]
  \item $\{0,13,110,1015,1205,2101,2124,2505,2579,2637,3424,3566,3760,3849,4993,5039,5243,5343,6082,6212\}$;\\[-4mm]
  \item $\{0,13,110,286,878,1113,1938,2151,2603,2806,2991,3578,3606,4191,5083,5149,5267,5932,6155,6468\}$;\\[-4mm]
  \item $\{0,13,113,285,350,937,1290,1403,2410,3224,4015,4153,4268,4326,4925,5245,6012,6231,6307,6521\}$;\\[-4mm]
  \item,$\{0,13,113,285,370,917,1290,1423,2410,3224,3995,4153,4268,4326,4925,5245,6012,6231,6307,6521\}$;\\[-4mm]
  \item $\{0,13,113,285,399,917,1261,1452,2410,3224,3995,4153,4268,4326,4925,5245,6012,6202,6307,6521\}$;\\[-4mm]
  \item $\{0,13,113,296,416,879,1295,1384,2424,3259,4011,4137,4261,4361,4960,5313,5902,6271,6309,6455\}$;\\[-4mm]
  \item $\{0,13,113,296,416,952,1295,1311,2424,3259,4084,4137,4261,4361,4960,5313,5902,6198,6309,6455\}$;\\[-4mm]
  \item $\{0,13,113,308,366,917,1276,1419,2424,3205,3995,4137,4261,4361,4906,5259,6010,6217,6309,6509\}$;\\[-4mm]
  \item $\{0,13,113,308,366,952,1241,1419,2424,3205,4030,4137,4261,4361,4906,5259,6010,6182,6309,6509\}$;\\[-4mm]
  \item $\{0,13,26,113,1014,1384,1415,1884,1990,2491,2576,2769,3043,3153,4352,4717,4935,5567,5955,6055\}$;\\[-4mm]
  \item $\{0,13,26,113,1014,1384,1415,1884,1990,2557,2653,2991,3741,4352,4983,5097,5203,5567,5893,5978\}$;\\[-4mm]
  \item $\{0,13,353,466,870,1371,1661,1911,2768,3080,3193,3583,4173,4203,5037,5395,5657,5929,6131,6175\}$;\\[-4mm]
  \item $\{0,13,353,709,878,1365,1905,2394,2560,3335,3501,3950,3963,4219,4907,5098,5389,5510,5929,5987\}$;\\[-4mm]
  \item $\{0,13,53,329,447,961,1425,2046,2449,2761,3077,3413,3636,4038,5056,5564,5892,6313,6356,6467\}$;\\[-4mm]
  \item $\{0,13,53,329,447,961,1425,2046,2684,2761,3077,3178,3636,4038,5056,5564,5892,6121,6232,6548\}$;\\[-4mm]
  \item $\{0,13,53,329,682,829,1068,2067,2613,3382,3705,3845,4118,4288,4930,5070,5276,5426,5823,6455\}$;\\[-4mm]
  \item $\{0,13,53,329,682,902,1068,2067,2613,3382,3705,3772,4045,4361,4930,5070,5203,5426,5823,6455\}$.
\end{itemize}
}

\bigskip
\bigskip
            
Currently we know $11$ non-equivalent trifferent codes with length $9$ and maximum cardinality $T(9)=27$:                   
{\tiny
\setitemize{leftmargin=*}
\begin{itemize}
  \item $\!\!\!\{0,\!121,\!242,\!3164,\!3282,\!3394,\!6325,\!6437,\!6555,\!7821,\!7915,\!7955,\!10256,\!10347,\!10378,\!11230,\!11315,\!11352,\!14886,\!14926,\!15020,\!15863,\!15900,\!15985,\!18295,\!18326,\!18417\}$;\\[-4mm]
  \item $\!\!\!\{0,\!121,\!242,\!3164,\!3282,\!3394,\!6325,\!6437,\!6555,\!7821,\!7915,\!7955,\!10256,\!10347,\!10378,\!11230,\!11315,\!11352,\!14886,\!14926,\!15020,\!15865,\!15896,\!15987,\!18293,\!18330,\!18415\}$;\\[-4mm]
  \item $\!\!\!\{0,\!121,\!242,\!3164,\!3282,\!3394,\!6325,\!6437,\!6555,\!7821,\!7915,\!7955,\!10256,\!10347,\!10378,\!11230,\!11315,\!11352,\!14886,\!14927,\!15019,\!15862,\!15900,\!15986,\!18296,\!18325,\!18417\}$;\\[-4mm]
  \item $\!\!\!\{0,\!121,\!242,\!3164,\!3282,\!3394,\!6325,\!6437,\!6555,\!7821,\!7915,\!7955,\!10256,\!10347,\!10378,\!11230,\!11315,\!11352,\!14886,\!14927,\!15019,\!15866,\!15895,\!15987,\!18292,\!18330,\!18416\}$;\\[-4mm]
  \item $\!\!\!\{0,\!121,\!242,\!3164,\!3282,\!3394,\!6325,\!6437,\!6555,\!7821,\!7915,\!7955,\!10256,\!10347,\!10378,\!11230,\!11315,\!11352,\!14886,\!14930,\!15016,\!15865,\!15897,\!15986,\!18293,\!18325,\!18420\}$;\\[-4mm]
  \item $\!\!\!\{0,\!121,\!242,\!3164,\!3282,\!3394,\!6325,\!6437,\!6555,\!7821,\!7915,\!7955,\!10256,\!10347,\!10378,\!11230,\!11315,\!11352,\!14887,\!14927,\!15018,\!15861,\!15901,\!15986,\!18296,\!18324,\!18418\}$;\\[-4mm]
  \item $\!\!\!\{0,\!121,\!242,\!3164,\!3282,\!3394,\!6325,\!6437,\!6555,\!7821,\!7915,\!7955,\!10256,\!10347,\!10378,\!11230,\!11315,\!11352,\!14887,\!14927,\!15018,\!15866,\!15894,\!15988,\!18291,\!18331,\!18416\}$;\\[-4mm]
  \item $\!\!\!\{0,\!121,\!242,\!3164,\!3282,\!3394,\!6325,\!6437,\!6555,\!7821,\!7915,\!7955,\!10256,\!10347,\!10378,\!11230,\!11315,\!11352,\!14887,\!14930,\!15015,\!15864,\!15898,\!15986,\!18293,\!18324,\!18421\}$;\\[-4mm]
  \item $\!\!\!\{0,\!121,\!242,\!3164,\!3282,\!3394,\!6325,\!6437,\!6555,\!7821,\!7915,\!7955,\!10256,\!10347,\!10378,\!11230,\!11315,\!11352,\!14890,\!14930,\!15012,\!15864,\!15895,\!15989,\!18290,\!18327,\!18421\}$;\\[-4mm]
  \item $\!\!\!\{0,\!121,\!242,\!3164,\!3282,\!3394,\!6325,\!6437,\!6555,\!7821,\!7915,\!7955,\!10256,\!10347,\!10378,\!11241,\!11281,\!11375,\!14875,\!14960,\!14997,\!15863,\!15900,\!15985,\!18295,\!18326,\!18417\}$;\\[-4mm]
  \item $\!\!\!\{0,\!121,\!242,\!3164,\!3282,\!3394,\!6363,\!6457,\!6497,\!7783,\!7895,\!8013,\!10276,\!10307,\!10398,\!11241,\!11281,\!11375,\!14875,\!14960,\!14997,\!15863,\!15900,\!15985,\!18275,\!18366,\!18397\}$                   
\end{itemize}
}


\begin{thebibliography}{ABNR22}

\bibitem[AB98]{ashikhmin1998minimal}
Alexei Ashikhmin and Alexander Barg.
\newblock Minimal vectors in linear codes.
\newblock {\em IEEE Transactions on Information Theory}, 44(5):2010--2017,
  1998.

\bibitem[ABN22]{alfarano2022geometric}
Gianira~N. Alfarano, Martino Borello, and Alessandro Neri.
\newblock A geometric characterization of minimal codes and their asymptotic
  performance.
\newblock {\em Advances in Mathematics of Communications}, 16(1):115--133,
  2022.

\bibitem[ABNR22]{three_combinatorial_perspectives}
Gianira~N. Alfarano, Martino Borello, Alessandro Neri, and Alberto Ravagnani.
\newblock Three combinatorial perspectives on minimal codes.
\newblock {\em SIAM Journal on Discrete Mathematics}, 36(1):461--489, 2022.

\bibitem[Ari94]{arikan1994upper}
Erdal Arikan.
\newblock An upper bound on the zero-error list-coding capacity.
\newblock {\em IEEE Transactions on Information Theory}, 40(4):1237--1240,
  1994.

\bibitem[BBK21]{bouyukliev2021computer}
Iliya Bouyukliev, Stefka Bouyuklieva, and Sascha Kurz.
\newblock Computer classification of linear codes.
\newblock {\em IEEE Transactions on Information Theory}, 67(12):7807--7814,
  2021.

\bibitem[BDGP23]{bishnoi2023blocking}
Anurag Bishnoi, Jozefien D'haeseleer, Dion Gijswijt, and Aditya Potukuchi.
\newblock Blocking sets, minimal codes and trifferent codes.
\newblock {\em arXiv preprint 2301.09457}, 2023.

\bibitem[BHOS98]{brouwer1998bounds}
Andries~E. Brouwer, Heikki~O. Hamalainen, Patric R.~J. \"Osterg\r{a}rd, and
  Neil J.~A. Sloane.
\newblock Bounds on mixed binary/ternary codes.
\newblock {\em IEEE Transactions on Information Theory}, 44(1):140--161, 1998.

\bibitem[CD21]{costa2021new}
Simone Costa and Marco Dalai.
\newblock New bounds for perfect k-hashing.
\newblock {\em Discrete Applied Mathematics}, 289:374--382, 2021.

\bibitem[DFCD22]{della2022improved}
Stefano Della~Fiore, Simone Costa, and Marco Dalai.
\newblock Improved bounds for $(b, k)$-hashing.
\newblock {\em IEEE Transactions on Information Theory}, 68(8):4983--4997,
  2022.

\bibitem[DFGP22]{della2022maximum}
Stefano Della~Fiore, Alessandro Gnutti, and Sven Polak.
\newblock The maximum cardinality of trifferent codes with lengths $5$ and $6$.
\newblock {\em Examples and Counterexamples}, 2:100051, 2022.

\bibitem[DS98]{dodunekov1998codes}
Stefan Dodunekov and Juriaan Simonis.
\newblock Codes and projective multisets.
\newblock {\em The Electronic Journal of Combinatorics}, 5:1--23, 1998.

\bibitem[FK84]{fredman1984size}
Michael~L. Fredman and J{\'a}nos Koml{\'o}s.
\newblock On the size of separating systems and families of perfect hash
  functions.
\newblock {\em SIAM Journal on Algebraic Discrete Methods}, 5(1):61--68, 1984.

\bibitem[GR22]{guruswami2022beating}
Venkatesan Guruswami and Andrii Riazanov.
\newblock Beating {F}redman-{K}oml{\'o}s for perfect $k$-hashing.
\newblock {\em Journal of Combinatorial Theory, Series A}, 188:105580, 2022.

\bibitem[HN21]{heger2021short}
Tam{\'a}s H{\'e}ger and Zolt{\'a}n~L{\'o}r{\'a}nt Nagy.
\newblock Short minimal codes and covering codes via strong blocking sets in
  projective spaces.
\newblock {\em IEEE Transactions on Information Theory}, 68(2):881--890, 2021.

\bibitem[KM88]{korner1988new}
J{\'a}nos Korner and Katalin Marton.
\newblock New bounds for perfect hashing via information theory.
\newblock {\em European Journal of Combinatorics}, 9(6):523--530, 1988.

\bibitem[LS06]{liu2006explicit}
Lihua Liu and Hao Shen.
\newblock Explicit constructions of separating hash families from algebraic
  curves over finite fields.
\newblock {\em Designs, Codes and Cryptography}, 41(2):221--233, 2006.

\bibitem[PZ22]{pohoata2022trifference}
Cosmin Pohoata and Dmitriy Zakharov.
\newblock On the trifference problem for linear codes.
\newblock {\em IEEE Transactions on Information Theory}, 68(11):7096--7099,
  2022.

\bibitem[Rea78]{read1978every}
Ronald~C. Read.
\newblock Every one a winner or how to avoid isomorphism search when
  cataloguing combinatorial configurations.
\newblock In {\em Annals of Discrete Mathematics}, volume~2, pages 107--120.
  Elsevier, 1978.

\bibitem[SZ08]{stinson2008some}
Douglas~R. Stinson and Gregory~M. Zaverucha.
\newblock Some improved bounds for secure frameproof codes and related
  separating hash families.
\newblock {\em IEEE Transactions on Information Theory}, 54(6):2508--2514,
  2008.

\bibitem[TQLZ21]{tang2021full}
Chunming Tang, Yan Qiu, Qunying Liao, and Zhengchun Zhou.
\newblock Full characterization of minimal linear codes as cutting blocking
  sets.
\newblock {\em IEEE Transactions on Information Theory}, 67(6):3690--3700,
  2021.

\bibitem[WX01]{wang2001explicit}
Huaxiong Wang and Chaoping Xing.
\newblock Explicit constructions of perfect hash families from algebraic curves
  over finite fields.
\newblock {\em Journal of Combinatorial Theory, Series A}, 93(1):112--124,
  2001.

\bibitem[XYar]{xing2023beating}
Chaoping Xing and Chen Yuan.
\newblock Beating the probabilistic lower bound on $q$-perfect hashing.
\newblock {\em Combinatorica}, pages 1--20, to appear.

\end{thebibliography}

\end{document}